\theoremstyle{plain}
\newtheorem{theorem}{Theorem}[section]
\newtheorem{proposition}[theorem]{Proposition}
\newtheorem{lemma}[theorem]{Lemma}
\newtheorem{definition}[theorem]{Definition}
\title{\bf Parity condition for irreducibility\\ of Heegaard splittings}
\author{Jung Hoon Lee}
\address{\small School of Mathematics, KIAS\\
207-43, Cheongnyangni 2-dong, Dongdaemun-gu\\
Seoul, Korea\\} \email{jhlee@kias.re.kr}
\date{}
\begin{document}

\subjclass[2000]{Primary 57N10, 57M25}

\keywords{Heegaard splitting, parity condition, irreducible}

\begin{abstract}
Casson and Gordon gave the rectangle condition for strong
irreducibility of Heegaard splittings \cite{CG}. We give a parity
condition for irreducibility of Heegaard splittings of irreducible
manifolds. As an application, we give examples of non-stabilized
Heegaard splittings by doing a single Dehn twist.
\end{abstract}

\maketitle

\section{Introduction}
 Given a non-minimal genus Heegaard splitting of a $3$-manifold, it is
 not an easy problem to show that it is irreducible or cannot be destabilized.
 In \cite{Kobayashi2}, Kobayashi showed that every genus $g\ge 3$
Heegaard splitting of $2$-bridge knot exterior is reducible. The
motivation of this paper was the question that whether there
exists an irreducible genus three Heegaard splitting of a tunnel
number one knot exterior which is not $2$-bridge.

 Casson and Gordon used the rectangle condition on Heegaard
 diagrams to show strong irreducibility of certain manifolds obtained
 by surgery in their unpublished paper \cite{CG}. See also (\cite{MS},
 Appendix).
 One can also refer to the paper by Kobayashi \cite{Kobayashi1}
 and Saito (\cite{Saito}, section $7$) for a good application of the
rectangle condition.

 In section $3$, we review that Casson-Gordon's rectangle
 condition implies strong irreducibility of Heegaard splittings.
 Also we consider weak version of rectangle condition for
 manifolds with non-empty boundary.

 In section $4$, we give a parity condition on Heegaard diagrams
 to guarantee that the given Heegaard splitting is non-stabilized.
 %It is a well known fact that a reducible Heegaard
 %splitting of an irreducible manifold is stabilized.
 Hence, if the manifold under consideration is irreducible, the
 Heegaard splitting is irreducible.

 \begin{theorem}
 Suppose M is an irreducible $3$-manifold.  Let $H_1\cup_S H_2$
  be a Heegaard splitting of M.
  Let $\{D_1,D_2,\cdots,D_{3g-3}\}$ and $\{E_1,E_2,\cdots,E_{3g-3}\}$
  be collections of essential disks of  $H_1$ and $H_2$ respectively giving
 pants decomposition of $S$.

 If $|D_i\cap E_j|\equiv 0 \pmod{2}$ for all the pairs $(i,j)$,
 then $H_1\cup_S H_2$ is irreducible.
\end{theorem}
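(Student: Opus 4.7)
My plan is to prove the stronger statement that the parity hypothesis forces the splitting to be non-stabilized; the theorem then follows from the classical fact that, when $M$ is irreducible, any reducible Heegaard splitting of $M$ must in fact be stabilized (the reducing sphere bounds a $3$-ball, inside which one locates a cancelling pair of disks). So, assuming for contradiction that the splitting is stabilized, fix essential disks $D\subset H_1$ and $E\subset H_2$ with $|\partial D\cap\partial E|=1$; the aim is a parity contradiction.

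The engine is the mod-$2$ intersection form on $H_1(S;\mathbb{Z}/2)$. Let $L_i=\ker\!\bigl(H_1(S;\mathbb{Z}/2)\to H_1(H_i;\mathbb{Z}/2)\bigr)$, so $L_1$ and $L_2$ are transverse Lagrangian subspaces of dimension $g$, and the boundary of any essential disk in $H_i$ represents a class in $L_i$. In particular $[\partial D]\in L_1$ and $[\partial E]\in L_2$.

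The crucial sublemma is that $\{[\partial D_i]\}_{i=1}^{3g-3}$ spans $L_1$ over $\mathbb{Z}/2$, and analogously for $\{[\partial E_j]\}$. To see this, form the dual graph $\Gamma$ whose $2g-2$ vertices are the pairs of pants in $S\setminus\bigcup\partial D_i$ (equivalently, the $3$-balls in $H_1\setminus\bigcup D_i$) and whose $3g-3$ edges are the $D_i$. Since $\Gamma$ is connected with first Betti number $g$, choosing a spanning tree leaves $g$ edges; the corresponding $g$ disks are non-separating in $H_1$ and cut it into a single $3$-ball, so they form a cut system and their boundaries give a basis of $L_1$.

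Writing $[\partial D]=\sum c_i[\partial D_i]$ and $[\partial E]=\sum d_j[\partial E_j]$ in $H_1(S;\mathbb{Z}/2)$, bilinearity of the intersection form gives
\[
[\partial D]\cdot[\partial E]\;=\;\sum_{i,j}c_i d_j\,\bigl([\partial D_i]\cdot[\partial E_j]\bigr),
\]
and each term on the right vanishes because $[\partial D_i]\cdot[\partial E_j]\equiv|D_i\cap E_j|\equiv 0\pmod 2$ by hypothesis. On the other hand, the single transverse intersection point yields $[\partial D]\cdot[\partial E]\equiv 1\pmod 2$, a contradiction. The main obstacle I anticipate is the reducible-implies-stabilized reduction, which I am invoking as a standard consequence of the irreducibility of $M$; once the stabilizing pair is in hand, the homology calculation is a short bilinearity argument.
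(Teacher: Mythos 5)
Your proof is correct, and it reaches the same parity contradiction as the paper but by a genuinely different route. The paper never passes to homology: after reducing to a stabilizing pair $(D,E)$ exactly as you do, it performs an explicit cut-and-connect surgery on $\partial D$ along $\bigcup_i D_i$, producing loops each of which lies in a single pair of pants and is therefore parallel to some $\partial D_{i_k}$ or inessential; summing intersections with $\partial E_j$ over these loops gives $|\partial D\cap\partial E_j|\equiv 0\pmod 2$ for every $j$, and a second round of the same surgery applied to $\partial E$ gives $|\partial D\cap\partial E|\equiv 0$, contradicting $|D\cap E|=1$. That construction is precisely a geometric realization of your algebraic statement that $[\partial D]$ lies in the span of the $[\partial D_i]$ in $H_1(S;\mathbb{Z}/2)$; your dual-graph/spanning-tree lemma replaces it with a cleaner structural fact (the $3g-3$ pants curves span $L_1$ because $g$ of them already form a cut system), and bilinearity of the mod-2 intersection form then does in one line what the paper does with two rounds of curve surgery. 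What your version buys is conceptual clarity --- the theorem becomes visibly the statement that $L_1$ and $L_2$ pair trivially mod 2 while a stabilizing pair pairs nontrivially --- at the cost of importing the standard facts about cut systems and $\ker\bigl(H_1(S;\mathbb{Z}/2)\to H_1(H_i;\mathbb{Z}/2)\bigr)$; the paper's argument is more elementary and self-contained. One small inaccuracy, harmless because you never use it: $L_1$ and $L_2$ need not be transverse (for the genus-one splitting of $S^1\times S^2$ they coincide); only the fact that each $L_i$ is spanned by the corresponding disk boundaries is needed.
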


As an application, we give examples of non-stabilized Heegaard
splittings by doing a single Dehn twist, in section $5$.

\section{Pants decomposition and essential disk in a handlebody}

Let $H$ be a genus $g\ge 2$ handlebody. Suppose a collection of
$3g-3$ essential disks $\{D_1,D_2,\cdots,D_{3g-3}\}$ cuts $H$ into
a collection of $2g-2$ solid pants-shaped $3$-balls
$\{B_1,B_2,\cdots,B_{2g-2}\}$. Let $P_i$ be the pants $B_i\cap
\partial H$ ($i=1,2,\cdots,2g-2$). Then $P_1\cup P_2\cup\cdots
\cup P_{2g-2}$ is a {\it pants decomposition} of $\partial H$.

Let $D$ be an essential disk in $H$. Assume that $D$ intersects
$\underset{i=1}{\overset{3g-3}{\bigcup}} D_i$ minimally.

\begin{definition}
A wave $\alpha(D)$ for an essential disk $D$ and the collection of
essential disks $\{D_1,D_2,\cdots,D_{3g-3}\}$ of a handlebody $H$
is a subarc of $\partial D$ cut by the intersection $D\cap
(\underset{i=1}{\overset{3g-3}{\bigcup}} D_i)$ satisfying the
following conditions.

\begin{itemize}
\item There exist an outermost arc $\beta$ and outermost disk
$\Delta$ of $D$ with $\partial\Delta=\alpha(D)\cup\beta$.
 \item
$(\alpha(D),\partial\alpha(D))$ is not homotopic, in $\partial H$,
into $\partial D_i$ containing $\partial\alpha(D)$.
\end{itemize}
\end{definition}

\begin{lemma}
Suppose $D$ is not isotopic to any $D_i$ $(i=1,2,\cdots,3g-3)$.
Then $\partial D$ contains a wave.
\end{lemma}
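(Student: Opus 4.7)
\noindent\emph{Plan of proof.} The strategy is to pick an outermost arc of $D\cap\bigcup D_i$ in $D$ and show that the corresponding sub-arc of $\partial D$ is a wave, deriving a contradiction with the minimality of $|D\cap\bigcup D_i|$ should the homotopy condition fail. First, $D\cap\bigcup D_i\ne\emptyset$: otherwise $D$ would lie in a single pants-ball $B_k$, making $\partial D$ a simple closed curve in the pair of pants $P_k$; since $D$ is essential in $H$, $\partial D$ is parallel in $P_k$ to some $\partial D_i$, and then $D$ and $D_i$ are disks in the ball $B_k$ with parallel boundaries, hence isotopic, contrary to assumption. Now fix an outermost sub-disk $\Delta$ of $D$ with respect to $D\cap\bigcup D_i$, with $\partial\Delta=\alpha\cup\beta$, $\alpha\subset\partial D$ and $\beta\subset D\cap D_j$ for some $j$. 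Then $\Delta$ sits in a pants-ball $B_k$ with $D_j\subset\partial B_k$, and $\alpha$ is an arc in $P_k$ with $\partial\alpha\subset\partial D_j$. The first bullet in the definition of a wave holds by construction.

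Suppose the second bullet fails, and let $E\subset\partial H$ be a disk with $\partial E=\alpha\cup\gamma$, $\gamma\subset\partial D_j$. Using standard innermost/outermost arguments together with the essentiality of each $\partial D_i$ in $\partial H$, I would isotope $E$ rel $\partial E$ so that $E\cap\partial D_i=\emptyset$ for $i\ne j$ and $E\cap\partial D_j=\gamma$, forcing $E\subset\overline{P_k}$. Let $D_j'\subset D_j$ be the sub-disk cobounded by $\beta$ and $\gamma$. Then $\Sigma=\Delta\cup D_j'\cup E$ is an embedded $2$-sphere in $H$, bounding a $3$-ball $B$ by irreducibility of the handlebody. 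Pushing $\Delta$ across $B$ onto $D_j'\cup E$ (and slightly off $D_j$ and $\partial H$) produces a disk $D'$ isotopic to $D$ with $|D'\cap D_j|<|D\cap D_j|$ and $|D'\cap D_i|=|D\cap D_i|$ for $i\ne j$, contradicting minimality of $|D\cap\bigcup D_i|$.

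The main obstacle is the cleanup producing $E\subset\overline{P_k}$. For $i\ne j$, since $\partial E\cap\partial D_i=\emptyset$, any component of $E\cap\partial D_i$ would have to be all of $\partial D_i$, which would then bound a disk in $\partial H$, contradicting essentiality of $\partial D_i$. For $i=j$, an extra arc of $E\cap\partial D_j$ together with a sub-arc of $\gamma$ forms a simple closed curve embedded in the circle $\partial D_j$; the only such curve is $\partial D_j$ itself, and it would bound a sub-disk of $E$ in $\partial H$, again contradicting essentiality.
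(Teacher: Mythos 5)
Your overall strategy matches the paper's (nonempty intersection, then an outermost disk) and is in fact more careful: the paper's proof simply asserts that an outermost disk yields a wave, whereas you attempt to verify the second condition in the definition of a wave by contradiction with minimality. The gap is in that verification, specifically in the final surgery step. You clean $\mathrm{int}(E)$ of the curves $\bigcup\partial D_i$, but you do not (and cannot by the same argument) clean it of the other subarcs of $\partial D$, nor can you clean $D_j'$ of the other arcs of $D\cap D_j$: an arc of $D\cap D_j$ different from $\beta$ may run through the subdisk $D_j'$, and a subarc of $\partial D$ other than $\alpha$ may enter $E$ through $\gamma$ ($\Delta$ is outermost in $D$, which says nothing about where $\beta$ sits in $D_j$ or where the rest of $\partial D$ sits in $\partial H$). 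In that case $(D\setminus\Delta)\cup D_j'\cup E$, pushed slightly off $D_j$ and $\partial H$, is not embedded --- any sheet of $D$ crossing $D_j$ at a point of $\mathrm{int}(D_j')$ meets every nearby parallel copy of $D_j'$ --- and ``pushing $\Delta$ across $B$'' is not an isotopy of $D$, since $B$ may contain other sheets of $D$. So the disk $D'$ with $|D'\cap D_j|<|D\cap D_j|$ is not produced by this move as written.

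The standard repair stays on the surface: from the embedded bigon $E$ between $\alpha$ and $\gamma$ (whose interior you have already shown to be disjoint from $\bigcup\partial D_i$), pass to an innermost bigon $E_0\subset E$ cobounded by a subarc of $\partial D$ and a subarc of $\gamma$, with $\mathrm{int}(E_0)$ disjoint from $\partial D\cup\bigcup\partial D_i$. An ambient isotopy of $\partial H$ supported near $E_0$, extended into $H$, reduces $|\partial D\cap\bigcup\partial D_i|$ by two and increases it nowhere; after discarding circle components of $D\cap\bigcup D_i$ by the usual innermost-circle argument, the number of arc components (half the number of boundary intersection points) drops, contradicting minimality. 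With that substitution your proof is complete, and it makes explicit the appeal to minimality that the paper's own one-line conclusion leaves implicit. The first paragraph of your argument (nonemptiness of $D\cap\bigcup D_i$) is correct and agrees with the paper.
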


\begin{proof}
Suppose $D\cap (\underset{i=1}{\overset{3g-3}{\bigcup}}
D_i)=\emptyset$. Then $\partial D$ lives in one of the pants $P_i$
for some $i$. Hence we can see that $D$ is isotopic to some $D_i$,
which contradicts the hypothesis of lemma. Therefore $D\cap
(\underset{i=1}{\overset{3g-3}{\bigcup}} D_i)\ne\emptyset$.

Consider the intersection $D\cap
(\underset{i=1}{\overset{3g-3}{\bigcup}} D_i)$.
 We can eliminate simple closed curves
of intersection in $D$ by standard innermost disk argument since
handlebodies are irreducible. So we may assume that the
intersection is a collection of arcs. Then there always exists an
outermost arc and outermost disk, hence a wave.
\end{proof}

The collection of arcs of intersection $D\cap
(\underset{i=1}{\overset{3g-3}{\bigcup}} D_i)$ divides $D$ into
subdisks. A subdisk would be a $2n$-gon such as bigon, $4$-gon,
and so on. Note that bigons are in one-to-one correspondence with
outermost disks, hence with waves. The following observation is
important for the parity condition that will be discussed in
section $4$.

\begin{lemma}
For all $i$ $(i=1,2,\cdots,3g-3)$, $|\partial D\cap \partial
D_i|\equiv 0 \pmod{2}$.
\end{lemma}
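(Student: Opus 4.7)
The plan is to exploit the fact that $D$ and each $D_i$ are both \emph{properly embedded} disks in $H$, so their intersection is a compact $1$-manifold in $H$ whose boundary lies in $\partial H$, and then count boundary points.

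More precisely, first I would put $D$ in general position with respect to $D_i$, so that $D \pitchfork D_i$ and $\partial D \pitchfork \partial D_i$ on $\partial H$. Then $D \cap D_i$ is a properly embedded $1$-submanifold of the disk $D$ (and also of $D_i$), so each component is either a simple closed curve in the interior of $D$ or a properly embedded arc in $D$ with both endpoints on $\partial D$. By the innermost disk argument used in the proof of the previous lemma (which only uses irreducibility of the handlebody), we may isotope $D$ so that $D \cap D_i$ contains no simple closed curves; alternatively this is guaranteed already by the minimality assumption on $D \cap \bigcup D_i$.

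Next I would identify $\partial(D\cap D_i)$ with $\partial D \cap \partial D_i$: a point lies in $\partial(D\cap D_i)$ iff it lies in $D\cap D_i$ and also in $\partial D$ (equivalently in $\partial D_i$), which is exactly the condition defining $\partial D \cap \partial D_i$. Each arc component contributes exactly $2$ points to this boundary, and closed components contribute $0$. Therefore
\[
|\partial D \cap \partial D_i| \;=\; 2\cdot\#\{\text{arc components of } D\cap D_i\} \;\equiv\; 0 \pmod{2}.
\]

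There is essentially no obstacle here; the content of the lemma is the observation that intersections of properly embedded surfaces-with-boundary pair up their boundary points. The only point requiring a little care is the elimination of closed curves of intersection, but that is already handled by the setup leading into the lemma.
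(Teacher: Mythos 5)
Your argument is the same as the paper's, which simply observes that each arc of $D\cap D_i$ contributes two endpoints to $\partial D\cap\partial D_i$; you have merely spelled out the general-position and closed-curve-elimination details that the paper leaves implicit from the setup. The proof is correct.
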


\begin{proof}
Since any arc of intersection of $D\cap D_i$ has two endpoints,
$|\partial D\cap \partial D_i|$ would be an even number.
\end{proof}

\section{Rectangle condition}
Let $S$ be a closed genus $g\ge 2$ surface and $P_1$ and $P_2$ be
pants in $S$ with $\partial P_i=a_i\cup b_i\cup c_i$ ($i=1,2$).
Assume that $\partial P_1$ and $\partial P_2$ intersect
transversely.

\begin{definition}
We say that $P_1$ and $P_2$ are tight if
\begin{enumerate}
\item There is no bigon $\Delta$ in $S$ with
$\partial\Delta=\alpha\cup\beta$, where $\alpha$ is a subarc of
$\partial P_1$ and $\beta$ is a subarc of $\partial P_2$
 \item For the pair $(a_1,b_1)$ and $(a_2,b_2)$, there is a
 rectangle $R$ embedded in $P_1$ and $P_2$ such that the interior
 of $R$ is disjoint from $\partial P_1\cup\partial P_2$ and the
 edges of $R$ are subarcs of $a_1,b_1,a_2,b_2$. This statement
 holds for the following all nine combinations of pairs.
 $$(a_1,b_1),(a_2,b_2)\quad (a_1,b_1),(b_2,c_2)\quad (a_1,b_1),(c_2,a_2)$$
 $$(b_1,c_1),(a_2,b_2)\quad (b_1,c_1),(b_2,c_2)\quad (b_1,c_1),(c_2,a_2)$$
 $$(c_1,a_1),(a_2,b_2)\quad (c_1,a_1),(b_2,c_2)\quad (c_1,a_1),(c_2,a_2)$$
\end{enumerate}
\end{definition}

Let $H_1\cup_S H_2$ be a genus $g\ge 2$ Heegaard splitting of a
$3$-manifold $M$. Let $\{D_1,D_2,\cdots,D_{3g-3}\}$ be a
collection of essential disks of $H_1$ giving a pants
decomposition $P_1\cup P_2\cup\cdots\cup P_{2g-2}$ of $S$ and
$\{E_1,E_2,\cdots,E_{3g-3}\}$ be a collection of essential disks
of $H_2$ giving a pants decomposition $Q_1\cup Q_2\cup\cdots\cup
Q_{2g-2}$ of $S$. Casson and Gordon introduced the rectangle
condition to show strong irreducibility of Heegaard splittings
\cite{CG}.  See also \cite{Kobayashi1}.

\begin{definition}
We say that $P_1\cup P_2\cup\cdots\cup P_{2g-2}$ and $Q_1\cup
Q_2\cup\cdots\cup Q_{2g-2}$ of $H_1\cup_S H_2$ satisfies the
rectangle condition if for each $i=1,2,\cdots,2g-2$ and
$j=1,2,\cdots,2g-2$, $P_i$ and $Q_j$ are tight.
\end{definition}

At a first glance, it looks not so obvious that rectangle
condition implies strong irreducibility. Here we give a proof.

\begin{proposition}
Suppose $P_1\cup P_2\cup\cdots\cup P_{2g-2}$ and $Q_1\cup
Q_2\cup\cdots\cup Q_{2g-2}$ of $H_1\cup_S H_2$ satisfies the
rectangle condition. Then it is strongly irreducible.
\end{proposition}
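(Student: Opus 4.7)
The plan is to assume the contrary---that there exist essential disks $D\subset H_1$ and $E\subset H_2$ with $\partial D\cap\partial E=\emptyset$---and extract a contradiction from one of the rectangles supplied by the tight condition. After isotoping $D$ to meet $\bigcup D_i$ minimally (eliminating any closed curves of intersection by innermost disks, possible since handlebodies are irreducible) and doing likewise with $E$ against $\bigcup E_j$, Lemma 2.2 yields for each of $D,E$ the following dichotomy: either the disk is isotopic to some member of its cut system, or its boundary contains a wave.

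First I would handle the generic case, where $\partial D$ contains a wave $\alpha(D)$ and $\partial E$ contains a wave $\alpha(E)$. By definition $\alpha(D)$ lies in a single pants $P_m$ of the decomposition, with both endpoints on one cuff, and is essential, so $\alpha(D)$ separates the other two cuffs of $P_m$; label these $b_m,c_m$. Analogously, $\alpha(E)$ lies in some pants $Q_n$ and separates two of its cuffs $y_n,z_n$. Apply the tight condition to the pair $(P_m,Q_n)$ and select the one of the nine rectangles whose $P_m$-edges lie on $b_m,c_m$ and whose $Q_n$-edges lie on $y_n,z_n$; call it $R$.

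Next I would work inside $R$. Since $R\subset P_m$ connects its $b_m$-edge to its $c_m$-edge through its interior while $\alpha(D)$ separates $b_m$ from $c_m$ in $P_m$, the intersection $\alpha(D)\cap R$ must contain an arc $\gamma_D$ that separates the $b_m$-edge from the $c_m$-edge of $R$; because the endpoints of $\alpha(D)$ do not lie on $b_m\cup c_m$, the endpoints of $\gamma_D$ must sit on the $y_n$-edge and on the $z_n$-edge. Symmetrically, $\alpha(E)\cap R$ contains an arc $\gamma_E$ running from the $b_m$-edge to the $c_m$-edge and separating the $y_n$-edge from the $z_n$-edge. Two properly embedded arcs in a disk joining the two opposite pairs of sides must cross, so $\gamma_D\cap\gamma_E\ne\emptyset$; but $\gamma_D\subset\partial D$ and $\gamma_E\subset\partial E$, contradicting $\partial D\cap\partial E=\emptyset$.

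Finally I would dispose of the degenerate branches of Lemma 2.2. If $D$ is isotopic to some $D_i$, take $\partial D$ to be a cuff $a_m$ of a pants $P_m$ and use instead the tight rectangle for the pair-of-pairs $(a_m,b_m),(y_n,z_n)$; now an entire edge of $R$ lies on $\partial D$, and the wave-of-$E$ argument above produces an arc $\gamma_E$ with an endpoint on that edge, immediately giving a point of $\partial D\cap\partial E$. The case where $E$ is isotopic to some $E_j$ is symmetric, and when both $D,E$ are isotopic to members of their cut systems the corresponding rectangle has edges on $\partial D$ and $\partial E$ meeting at a corner. The main obstacle is bookkeeping: one must check that in each branch of the dichotomy the relevant pair $(b_m,c_m)$ (or $a_m$) and $(y_n,z_n)$ (or $y_n$) matches one of the nine prescribed rectangles---this is exactly what the ``all nine combinations'' clause of tightness is designed to guarantee.
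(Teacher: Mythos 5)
Your central geometric argument---that a wave of $D$ and a wave of $E$ must both cross the tight rectangle $R$ in properly embedded arcs joining opposite pairs of sides, and that two such arcs in a disk must intersect---is correct, and it is essentially the paper's argument: the paper phrases it as ``the wave of $D$ produces all three types of essential arcs $\alpha_{n,ab},\alpha_{n,bc},\alpha_{n,ca}$ in each $Q_n$, and a wave of $E$ must cross one of them,'' while you localize the same intersection to the single rectangle $R$. Your treatment of the degenerate branches ($D$ isotopic to some $D_i$, $E$ isotopic to some $E_j$, or both) is also fine; in the last case the rectangle condition already forces every $\partial D_i$ to meet every $\partial E_j$ at the corners of the rectangles.

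There is, however, a genuine gap at the very first step, and it is exactly the step on which the paper spends most of its proof. You isotope $D$ so that it meets $\bigcup D_i$ minimally and $E$ so that it meets $\bigcup E_j$ minimally, and you then continue to use $\partial D\cap\partial E=\emptyset$ to derive the contradiction. But those two minimizing isotopies move $\partial D$ and $\partial E$ in $S$, and nothing in your proposal guarantees they can be carried out while keeping $\partial D$ and $\partial E$ disjoint. This is not automatic: pushing $\partial D$ across a bigon it forms with some $\partial D_i$ may sweep across arcs of $\partial E$, and pushing those arcs of $\partial E$ out of the bigon first may in turn change $|\partial E\cap\bigcup\partial E_j|$, so a naive alternating minimization need not terminate in a position where both disks are minimal with respect to their cut systems and still disjoint. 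The paper resolves this using precisely the part of the tightness hypothesis that your proposal never invokes, namely condition (1) (no bigon between $\partial P_i$ and $\partial Q_j$): an arc of $\partial E$ trapped in a bigon $\Delta$ between $\partial D$ and $\partial D_i$ must enter and leave through the $\partial D_i$ side, and the subdisk it cuts off contains no arc of $\partial Q_j$ cutting off a bigon with $\partial P_i$, so pushing $\partial E$ out of $\Delta$ does not increase $|\partial E\cap\bigcup\partial E_j|$; hence the total complexity strictly decreases with each bigon removal and the process terminates with $D$ and $E$ both minimal and still disjoint. You need to supply this argument (or an equivalent one); the fact that condition (1) of the tightness definition appears nowhere in your write-up is the symptom of the missing step.
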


\begin{proof}
Suppose $H_1\cup _S H_2$ is not strongly irreducible. Then there
exist essential disks $D\subset H_1$ and $E\subset H_2$ with
$D\cap E=\emptyset$. Suppose there is a bigon $\Delta$ in $S$ with
$\partial\Delta=\alpha\cup\beta$, where $\alpha$ is a subarc of
$\partial D$ and $\beta$ is a subarc of $\partial P_i$ for some
$i$. If any subarc of $\partial E$ is in $\Delta$, we remove it by
isotopy into $S-\Delta$ before we remove the bigon $\Delta$ by
isotopy of neighborhood of $\alpha$ in $D$. So we can remove such
bigons maintaining the property that $D\cap E=\emptyset$. Also
note that the number of components of intersection $|\partial
E\cap (\underset{j=1}{\overset{3g-3}{\bigcup}} \partial E_j)|$
does not increase after the isotopy since there is no bigon
$\Delta'$ with $\partial\Delta'=\gamma\cup\delta$, where $\gamma$
is a subarc of $\partial P_i$ for some $i$ and $\delta$ is a
subarc of $\partial Q_j$ for some $j$ by the definition of
tightness of $P_i$ and $Q_j$. We can also remove a bigon made by a
subarc of $\partial E$ and a subarc of $\partial Q_j$ for some $j$
similarly. So we may assume that $D$ intersects
$(\underset{i=1}{\overset{3g-3}{\bigcup}} D_i)$ minimally and $E$
intersects $(\underset{j=1}{\overset{3g-3}{\bigcup}} E_j)$
minimally with $D\cap E=\emptyset$.

Suppose $D$ is isotopic to $D_i$ for some $i$. Let $\partial
Q_j=a_j\cup b_j\cup c_j$ ($j=1,2,\cdots,2g-2$). Then $\partial
D\cap Q_j$ contains all three types of essential arcs
$\alpha_{j,ab}, \alpha_{j,bc}, \alpha_{j,ca}$ by the rectangle
condition, where $\alpha_{j,ab}$ is an arc in $Q_j$ connecting
$a_j$ and $b_j$, $\alpha_{j,bc}$ is an arc connecting $b_j$ and
$c_j$ and $\alpha_{j,ca}$ is an arc connecting $c_j$ and $a_j$.
Then $E$ is not isotopic to any $E_j$ since $D\cap E=\emptyset$.
Then $\partial E$ contains a wave by Lemma 2.2 and this is a
contradiction since a wave intersects at least one of
$\alpha_{j,ab}, \alpha_{j,bc}, \alpha_{j,ca}$ for some $j$ and
$D\cap E=\emptyset$.

If $D$ is not isotopic to any $D_i$, $\partial D$ contains a wave
by Lemma 2.2. Then also in this case, $\partial D\cap Q_j$
contains all three types of essential arcs $\alpha_{j,ab},
\alpha_{j,bc}, \alpha_{j,ca}$ of $Q_j$ by the rectangle condition.
This gives a contradiction by the same argument as in the above.
\end{proof}

\begin{figure}[h]
   \centerline{\includegraphics[width=9.5cm]{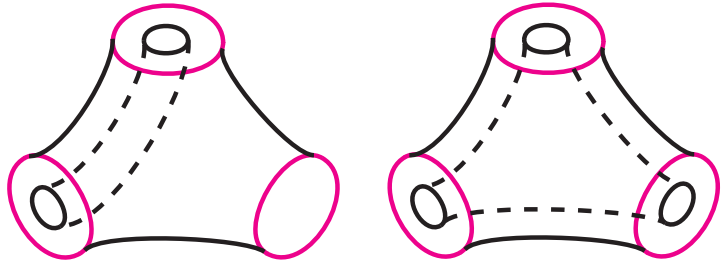}}
    \caption{Pants decomposition for compressionbody}
\end{figure}

 Now we consider manifolds with non-empty boundary. For a given
 compressionbody, there can be many ways of pants decomposition.
 But we can give a specific pants decomposition as in the Figure
 1. where if a piece has inner hole, it is one of the two types in
 the Figure 1.

\begin{figure}[h]
  \centerline{\includegraphics[width=10cm]{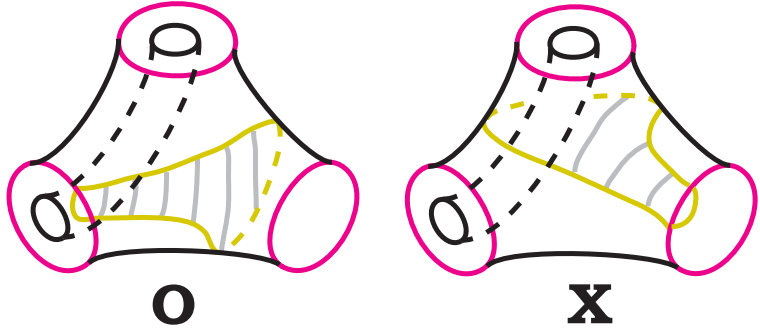}}
    \caption{Wave is more restrictive in compressionbody.}
\end{figure}

For an essential disk in a compressionbody, the existence of wave
is more restrictive because of the ``minus" boundary of
compressionbody (Figure 2.). So we can say the following
proposition.

\begin{proposition}
For a Heegaard splitting $H_1\cup_S H_2$ and pants decomposition
$P_1\cup P_2\cup\cdots\cup P_{2g-2}$ and $Q_1\cup
Q_2\cup\cdots\cup Q_{2g-2}$ of a $3$-manifold with non-empty
boundary, strictly less number of rectangles compared to
Casson-Gordon's rectangle condition implies strong irreducibility.
\end{proposition}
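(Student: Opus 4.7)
The plan is to mimic the proof of Proposition 3.3 almost verbatim, the only change being to exploit the stronger wave restriction in compressionbodies illustrated in Figure 2. So I would assume toward contradiction that $H_1\cup_S H_2$ is weakly reducible; pick disjoint essential disks $D\subset H_1$ and $E\subset H_2$; and, using tightness of each pair $P_i,Q_j$ that the weakened hypothesis does retain, remove bigons between $\partial D$ and the $\partial P_i$'s, and between $\partial E$ and the $\partial Q_j$'s, keeping $D\cap E=\emptyset$ and the intersections with $\bigcup D_i$ and $\bigcup E_j$ minimal, exactly as in Proposition 3.3.

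The essential new ingredient is the following observation about waves in a compressionbody $C$ whose outer boundary $\partial_+ C$ is cut along the specific pants decomposition of Figure 1. If a pants piece $P$ has an inner-hole boundary curve $\gamma$ that is parallel in $\partial_+ C$ to a component of $\partial_- C$, then no wave of any essential disk of $C$ can have its two feet on $\gamma$: the corresponding outermost disk, capped off with the annulus between $\gamma$ and $\partial_- C$, would form a disk either inessential in $C$ or isotopic into some $D_i$, contradicting the wave definition or minimality of intersection. A parallel statement (cf.\ Figure 2) rules out further wave configurations in the second pants type of Figure 1.

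Given this, I would retrace the rectangle-versus-wave step of Proposition 3.3: to block a wave of $\partial E$ that lives in a pants $Q_j$ it suffices for $\partial D$ to cross $Q_j$ in those arc types among $\alpha_{j,ab},\alpha_{j,bc},\alpha_{j,ca}$ whose endpoints lie on boundary curves of $Q_j$ that can actually support a wave. Whenever $Q_j$ has an inner hole, one or two of these three arc types become superfluous, and the corresponding entries in Casson--Gordon's list of nine combinations may be dropped; the analogous reduction applies with the roles of $P_i$ and $Q_j$ swapped. In every surviving case the wave produced by Lemma 2.2 must still cross one of the retained arc types, yielding the same contradiction with $D\cap E=\emptyset$ as before.

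The main obstacle is precisely tallying, for each of the two pants types in Figure 1, which of the nine combinations can be dropped, i.e.\ giving a clean case analysis showing that a wave of an essential disk in the compressionbody is always pierced by one of the retained rectangles. Once this bookkeeping is done pants-type by pants-type, the rest of the argument is word-for-word Proposition 3.3 with ``all nine combinations'' replaced by ``the surviving subset,'' and the strict inequality in the statement is exactly what the new wave-exclusions buy us.
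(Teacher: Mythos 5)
Your proposal follows essentially the same route as the paper, which in fact offers no formal proof of this proposition at all: it only observes (via Figures 1 and 2) that the specific pants decomposition of a compressionbody and the presence of the minus boundary make waves more restrictive, so that fewer of the nine rectangle types are needed to block them. The case-by-case tally you defer is precisely the detail the paper also leaves implicit, so your sketch is at least as complete as the paper's own justification.
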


\section{Parity condition}

Let $H_1\cup_S H_2$ be a genus $g\ge 2$ Heegaard splitting of a
$3$-manifold $M$.  Let $\{D_1,D_2,\cdots,D_{3g-3}\}$ and
$\{E_1,E_2,\cdots,E_{3g-3}\}$
  be collections of essential disks of  $H_1$ and $H_2$ respectively
  giving pants decomposition of $S$.

\begin{definition}
We say that $H_1\cup_S H_2$ satisfies the even parity condition
 if $|D_i\cap E_j|\equiv 0 \pmod{2}$ for all the pairs $(i,j)$.
\end{definition}

 First we give an example of an irreducible manifold which has a
 Heegaard diagram satisfying the even parity condition. The example is a
 weakly reducible genus three Heegaard splitting of ${\text
 {(torus)}}\times S^1$ which gave inspiration for the parity
 condition. Since $\pi_1({\text{(torus)}}\times S^1)$ has rank $3$,
 genus three Heegaard splitting is of minimal genus, hence
 irreducible.

Consider ${\text{(torus)}}\times \{{\text{pt}}\}$ in ${\text
 {(torus)}}\times S^1$. Remove small open disk ${\text{int}}(D)$ from
 ${\text{(torus)}}\times \{{\text{pt}}\}$ and take its
 product neighborhood
 $({\text{(torus)}}\times \{{\text{pt}}\} -{\text{int}}(D))\times
 [0,1]$. It is a genus two handlebody. Attach a $1$-handle to
$({\text{(torus)}}\times \{{\text{pt}}\} -{\text{int}}(D))\times
 [0,1]$ along two disk
 $D_1\subset ({\text{(torus)}}\times \{{\text{pt}}\} -{\text{int}}(D))\times
 \{0\}$ and
 $D_2\subset({\text{(torus)}}\times \{{\text{pt}}\} -{\text{int}}(D))\times
 \{1\}$. The result is a genus three handlebody
 $H_1=({\text{(torus)}}\times \{{\text{pt}}\} -{\text{int}}(D))\times
 [0,1]\underset{D_1\cup D_2}{\cup} 1{\text{-handle}}$.
 The exterior $H_2={\text{cl}}({\text{(torus)}}\times S^1-H_1)$ is also a genus three
 handlebody and this gives a Heegaard splitting
 ${\text{(torus)}}\times S^1=H_1 \cup H_2$. This kind of Heegaard
 splitting is well understood, for example in \cite{Schultens}.

\begin{figure}[h]
   \centerline{\includegraphics[width=6cm]{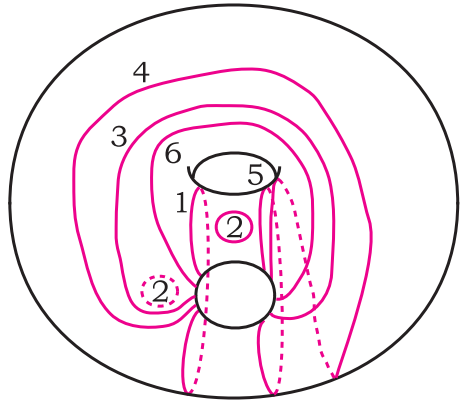}}
    \caption{essential disks in $H_1$ giving a pants decomposition}
\end{figure}

 Take a collection of essential disks with labels $1,2,3,4,5,6$ in
 $H_1$ giving a pants decomposition as in the Figure 3.
 Figure 3. shows essential arcs $1,3,4,5,6$ in a once punctured
 torus, but regard the surface and curves as being taken products with
 $[0,1]$. The two curves with label $2$ indicates the positions to
 which a $1$-handle is attached.

\begin{figure}[h]
   \centerline{\includegraphics[width=10cm]{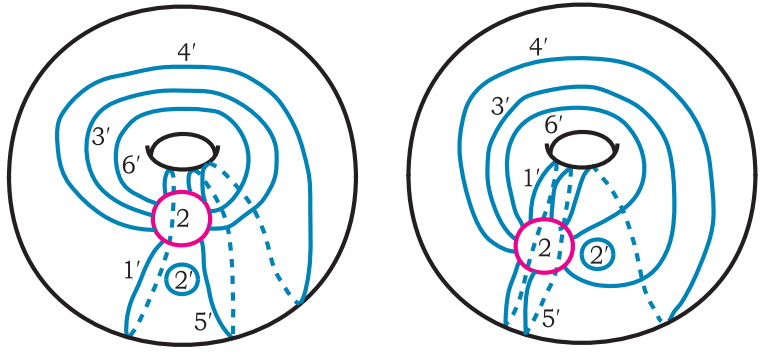}}
    \caption{Intersections of essential disks $1', 2', 3', 4', 5', 6'$
    of $H_2$ with
    ${\text{(torus)}}\times \{0\}$ and
    ${\text{(torus)}}\times \{1\}$}
\end{figure}

In $H_2$, a collection of essential disks with labels
$1',2',3',4',5',6'$ giving a pants decomposition is taken as in
the Figure 4. The left one in Figure 4. is $H_2\cap
({\text{(torus)}}\times \{0\})$ and the right one is $H_2\cap
({\text{(torus)}}\times \{1\})$. Each of arcs $1', 3', 4', 5', 6'$
in the left one is connected to corresponding one in the right
passing through the disk with label $2$. A $1$-handle is attached
along the disks with label $2'$.

Now we have information about the boundaries of all $12$ essential
disks of this specific decomposition and we can check all pairs of
intersections. For each of disks $1', 2',3',4',5',6'$, the
sequence of intersections with $1,2,3,4,5,6$ is as follows
ignoring the orientations and starting points.
$$ 1' = 1436123462$$
$$ 2' = 1546351436$$
$$ 3' = 1452341532$$
$$ 4' = 1635261532$$
$$ 5' = 5364523462$$
$$ 6' = 1452615462$$

So we can see that it satisfies the even parity condition, hance
irreducible.

\vspace{0.2cm}
\begin{proof}{\it (of Theorem 1.1)}
It is well known that reducible Heegaard splitting of an
irreducible manifold is stabilized. So it suffices to show that
$H_1\cup_S H_2$ is non-stabilized.

\begin{figure}[h]
   \centerline{\includegraphics[width=10cm]{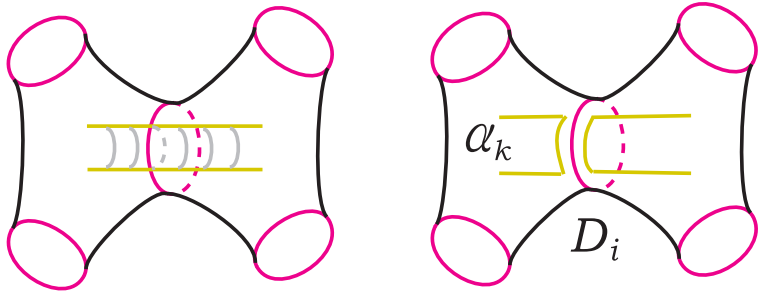}}
    \caption{$\alpha_k$ lives in a pair of pants.}
\end{figure}

Suppose that $H_1\cup_S H_2$ is stabilized. Then there exist
essential disks $D$ in $H_1$ and $E$ in $H_2$ such that $|D\cap
E|=1$. We may assume that the intersection $D\cap
(\underset{i=1}{\overset{3g-3}{\bigcup}} D_i)$ is a collection of
arcs. Cut $D$ by $(\underset{i=1}{\overset{3g-3}{\bigcup}} D_i)$.
Then $D$ is divided into subdisks. For any arc, say $\gamma$, of
intersection $D\cap D_i$, two copies of $\gamma$, $\gamma_1$ and
$\gamma_2$ are created on both sides of $D_i$ which are parallel
to each other. Connect two endpoints of $\gamma_1$ and also
connect two endpoints of $\gamma_2$ by arcs in $S$ that are
parallel and in opposite sides of $D_i$ to each other as in the
Figure 5. We do this cut-and-connect operation for all the arcs
$D\cap (\underset{i=1}{\overset{3g-3}{\bigcup}} D_i)$. Let
$\{\alpha_k\}$ be a collection of loops thence obtained from
$\partial D$. Note that each $\alpha_k$ lives in a pair of pants.
Some $\alpha_k$ would be isotopic to $\partial D_{i_k}$ and some
other $\alpha_k$ would possibly be a trivial loop.

Similarly, from $\partial E$ we obtain a collection of loops
$\{\beta_k\}$. Some $\beta_k$ would be isotopic to $\partial
E_{j_k}$ and some other $\beta_k$ would possibly be a trivial
loop.

First we consider the parity of $|\partial D\cap \partial E_j|$
for each $j$ ($j=1,2,\cdots,3g-3$) which will be used in the
below. Its parity is equivalent to
$\underset{k}{\sum}\,|\alpha_k\cap
\partial E_j| \pmod{2}$ since in the above cut-and-connect operation
two parallel copies $\gamma_1$ and $\gamma_2$ were created. It is
again equivalent to $\underset{k}{\sum}\,|\partial D_{i_k}\cap
\partial E_j|+\sum | {\text {(trivial loop)}} \cap \partial E_j|\pmod{2}$.
By the hypothesis of even parity condition, it is even. By similar
arguments, we have the following equalities in $\pmod{2}$.

$$|\partial D\cap \partial E|
\equiv \underset{k}{\sum}|\partial D\cap \beta_k| \equiv
\underset{k}{\sum}|\partial D\cap \partial E_{j_k}|+\sum |\partial
D\cap {\text {(trivial loop)}}| \equiv 0$$

This is a contradiction since $|D\cap E|=1$. So we conclude that
$H_1\cup_S H_2$ is irreducible.
\end{proof}

\section{Examples obtained by a single Dehn twist}
It is known that there exist $3$-manifolds which have infinitely
many distinct strongly irreducible Heegaard splittings. Many of
the known examples are obtained from a given Heegaard splitting by
Dehn twisting the surface several times. For example, see
\cite{CG} and \cite{MSS}. In this section, we obtain
non-stabilized Heegaard splittings by just a single Dehn twist, if
the given splitting satisfies the even parity condition.

\begin{lemma}
Suppose a genus $g\ge 2$ Heegaard splitting $H_1\cup_S H_2$ and
collections of essential disks $\{D_1,D_2,\cdots,D_{3g-3}\}$ and
$\{E_1,E_2,\cdots,E_{3g-3}\}$ satisfy the even parity condition.
Let $\alpha$ be a simple closed cure in $S$ such that $|\alpha\cap
E_j|\equiv 0\pmod{2}$ for all ($j=1,2,\cdots,3g-3$).

 If we alter $\{D_1,D_2,\cdots,D_{3g-3}\}$ to
 $\{D'_1,D'_2,\cdots,D'_{3g-3}\}$
by Dehn twisting $\partial H_1$ along $\alpha$ and don't change
$\{E_1,E_2,\cdots,E_{3g-3}\}$, then
$\{D'_1,D'_2,\cdots,D'_{3g-3}\}$ and $\{E_1,E_2,\cdots,E_{3g-3}\}$
satisfy the even parity condition.
\end{lemma}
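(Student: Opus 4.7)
The plan is to reduce the problem to a computation in $H_1(S;\mathbb{Z}/2)$. Since $D_i\subset H_1$ and $E_j\subset H_2$ meet only on the Heegaard surface, $|D_i\cap E_j|=|\partial D_i\cap\partial E_j|$ is just a count of transverse intersection points on $S$. Writing $\tau_\alpha$ for the Dehn twist of $S$ along $\alpha$, one has $\partial D'_i=\tau_\alpha(\partial D_i)$, so the quantity to control is $|\tau_\alpha(\partial D_i)\cap\partial E_j|$ for each pair $(i,j)$.

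Next I would invoke two standard facts. First, for transversely intersecting simple closed curves on an oriented surface, the geometric intersection count is congruent mod $2$ to the algebraic intersection number, since each crossing contributes $\pm 1$. Second, the Dehn twist acts on $H_1(S;\mathbb{Z}/2)$ by
\[
[\tau_\alpha(\gamma)] \;=\; [\gamma]+\langle [\gamma],[\alpha]\rangle\cdot[\alpha],
\]
where $\langle\cdot,\cdot\rangle$ denotes the mod-$2$ intersection form. Pairing the identity with $[\partial E_j]$ gives
\[
|D'_i\cap E_j|\;\equiv\;\langle[\partial D_i],[\partial E_j]\rangle+\langle[\partial D_i],[\alpha]\rangle\cdot\langle[\alpha],[\partial E_j]\rangle\pmod{2}.
\]

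Finally I would apply the hypotheses. The first summand equals $|D_i\cap E_j|\pmod 2$, which vanishes by the given even parity condition on $\{D_i\},\{E_j\}$; the second summand contains the factor $\langle[\alpha],[\partial E_j]\rangle\equiv |\alpha\cap E_j|\equiv 0\pmod{2}$ by the assumption on $\alpha$. Hence $|D'_i\cap E_j|$ is even for every $(i,j)$, which is exactly the even parity condition for $\{D'_i\},\{E_j\}$. The only point that requires any care is justifying the Dehn twist formula and the algebraic-geometric equivalence mod $2$, both of which are classical; sign conventions for orientations are irrelevant since everything is taken mod $2$. I would also remark that no parity hypothesis on $|\alpha\cap D_i|$ is needed, because the factor $\langle[\alpha],[\partial E_j]\rangle$ alone already kills the second summand.
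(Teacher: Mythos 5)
Your argument is correct. The paper's own proof is more hands-on: it works with the actual twisted curves and asserts the geometric count $|D'_i\cap E_j|=|D_i\cap E_j|+|D_i\cap\alpha|\cdot|\alpha\cap E_j|$ for the representative of $\partial D'_i$ obtained by performing the twist in an annulus neighborhood of $\alpha$ (each of the $|D_i\cap\alpha|$ strands of $\partial D_i$ crossing the annulus picks up a parallel copy of $\alpha$, contributing $|\alpha\cap E_j|$ new intersections), and then reduces mod $2$. You instead pass immediately to $H_1(S;\mathbb{Z}/2)$, use that transverse geometric intersection counts agree mod $2$ with the mod-$2$ intersection pairing, and apply the standard action of a Dehn twist on homology, $[\tau_\alpha(\gamma)]=[\gamma]+\langle[\gamma],[\alpha]\rangle[\alpha]$. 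The two computations are the same identity viewed at different levels: the paper's is the exact geometric statement of which your displayed congruence is the mod-$2$ shadow. What your route buys is robustness --- you never need to argue that the particular twisted representative realizes exactly that intersection count, nor worry about subsequent isotopies, since everything is a homological invariant mod $2$; what it costs is the (mild) need to cite the algebraic-versus-geometric equivalence and the Picard--Lefschetz-type twist formula, both classical. Your closing remark that no parity hypothesis on $|\alpha\cap D_i|$ is needed matches the paper, which likewise only assumes $|\alpha\cap E_j|$ even.
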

\begin{proof}
By Dehn twisting $\partial H_1$ along $\alpha$, we can calculate
that $|D'_i\cap E_j|$ is equal to $|D_i\cap E_j|+|D_i\cap
\alpha|\cdot|\alpha\cap E_j|$. Since both $|D_i\cap E_j|$ and
$|\alpha\cap E_j|$ are even, $|D'_i\cap E_j|$ is an even number.
So we conclude that $\{D'_1,D'_2,\cdots,D'_{3g-3}\}$ and
$\{E_1,E_2,\cdots,E_{3g-3}\}$ satisfy the even parity condition.
\end{proof}

Take a neighborhood $N(\alpha)$ of $\alpha$ in $H_1$. We may
assume that ${\text{cl}}(H_1-N(\alpha))$ is a handlebody of same
genus with $H_1$ and ${\text{cl}}(H_1-N(\alpha))\cap N(\alpha)$ is
an annulus. Remove $N(\alpha)$ from $H_1$ and attach a solid torus
back to ${\text{cl}}(H_1-N(\alpha))$ so that a meridian of the
attaching solid torus is mapped to $\frac{1}{1}$-slope of
$\partial N(\alpha)$. This $\frac{1}{1}$-surgery is equivalent to
that $H_1$ is changed by a Dehn twist of $\partial H_1$ along
$\alpha$. Hence we have the following result by Lemma 5.1.

\begin{theorem}
Suppose a genus $g\ge 2$ Heegaard splitting $H_1\cup_S H_2$ and
collections of essential disks $\{D_1,D_2,\cdots,D_{3g-3}\}$ and
$\{E_1,E_2,\cdots,E_{3g-3}\}$ satisfy the even parity condition.
Let $\alpha$ be a simple closed cure in $S$ such that $|\alpha\cap
E_j|\equiv 0\pmod{2}$ for all ($j=1,2,\cdots,3g-3$).

Then the manifold obtained by $\frac{1}{1}$-surgery on $\alpha$
has a non-stabilized Heegaard splitting which is obtained from $S$
by a Dehn twist along $\alpha$.
\end{theorem}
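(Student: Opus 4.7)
The plan is to assemble the result from Lemma 5.1 together with the non-stabilization half of the proof of Theorem 1.1. First, I would make explicit the identification, already sketched in the paragraph preceding the statement, between $\frac{1}{1}$-surgery on $\alpha\subset S$ and a Dehn twist of the splitting. Replacing a regular neighborhood $N(\alpha)\subset H_1$ by a solid torus whose meridian meets the $\frac{1}{1}$-slope on $\partial N(\alpha)$ has the effect of precomposing the attaching homeomorphism of $H_1\cup_S H_2$ with the Dehn twist $\tau_\alpha$ along $\alpha$. Consequently the surgered manifold $M'$ inherits a Heegaard splitting $H_1'\cup_{S'}H_2$, where $H_2$ is unchanged, $S'=\tau_\alpha(S)$, and $H_1'$ is the genus $g$ handlebody obtained from $H_1$ by regluing through $\tau_\alpha$.

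On this new splitting I would take the pants decompositions $\{D_i'\}=\{\tau_\alpha(D_i)\}$ on the $H_1'$ side and the original $\{E_j\}$ on the $H_2$ side. The hypothesis $|\alpha\cap E_j|\equiv 0\pmod 2$ for every $j$ is exactly the input required by Lemma 5.1, which immediately yields $|D_i'\cap E_j|\equiv 0\pmod 2$ for all pairs $(i,j)$; that is, the new diagram satisfies the even parity condition.

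Finally, I would rerun the argument of Theorem 1.1 on this new diagram. The crucial observation is that that proof uses irreducibility of the ambient manifold only in its first sentence, in order to upgrade \emph{non-stabilized} to \emph{irreducible}; the parity-based contradiction that $|\partial D\cap\partial E|\equiv 0\pmod 2$ while $|D\cap E|=1$ directly obstructs a stabilizing pair without any assumption on the ambient $3$-manifold. Applied to $\{D_i'\}$ and $\{E_j\}$, this concludes that $H_1'\cup_{S'}H_2$ is non-stabilized. The only point requiring a moment of care is the translation between the surgery and the Dehn twist on the splitting surface; past that bookkeeping, the result is essentially immediate from Lemma 5.1 and the non-stabilization half of Theorem 1.1.
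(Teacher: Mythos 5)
Your proposal is correct and follows essentially the same route as the paper: the paper likewise identifies the $\frac{1}{1}$-surgery with a Dehn twist of the gluing along $\alpha$, invokes Lemma 5.1 to get the even parity condition for $\{\tau_\alpha(D_i)\}$ and $\{E_j\}$, and relies on the fact that the parity argument in the proof of Theorem 1.1 rules out a stabilizing pair without any irreducibility assumption on the ambient manifold. Your write-up simply makes explicit the steps the paper leaves implicit in its one-line deduction.
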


{\bf Acknowledgement}

The author would like to thank Eric Sedgwick and John Berge for
advices on previous mistake and Sangyop Lee for reading the
manuscript.

\end{document}